\newcolumntype{L}[1]{>{\raggedright\let\newline\\\arraybackslash\hspace{0pt}}m{#1}}
\newcolumntype{C}[1]{>{\centering\let\newline\\\arraybackslash\hspace{0pt}}m{#1}}
\newcolumntype{R}[1]{>{\raggedleft\let\newline\\\arraybackslash\hspace{0pt}}m{#1}}
\theoremstyle{plain}
\newtheorem{thm}{Theorem}
\newtheorem{lemma}{Lemma}
\newtheorem{cor}{Corollary}
\theoremstyle{remark}
\newtheorem{remark}{Remark}
\theoremstyle{remark}
\theoremstyle{definition}
\newtheorem{defn}[thm]{Definition} 
\DeclareMathAlphabet{\pazocal}{OMS}{zplm}{m}{n}
\newcommand{\RN}[1]{%
  \textup{\uppercase\expandafter{\romannumeral#1}}%
}
\newcommand{\mf}[1]{\mathbf{#1}}
\title{\LARGE \bf
Differential-Flatness and Control of Quadrotor(s) with a Payload Suspended through Flexible Cable(s)}
\author{Prasanth Kotaru$^{1}$, Guofan Wu$^{2}$ and Koushil Sreenath$^{1}$
\thanks{*This work is supported in part by NSF Grants IIS-1464337 and CMMI-1538869, PITA, Autel and in part by the Google faculty research award.}
\thanks{$^{1}$P. Kotaru and K. Sreenath are with the Dept. of Mechanical Engineering, University of California, Berkeley, CA, 94720, 
        {\tt\small \{prasanth.kotaru, koushils\}@berkeley.edu}}%
\thanks{$^{2}$ G. Wu is with Department of Mechanical Engineering,
Carnegie Mellon University, 5000 Forbes Avenue, 	Pittsburgh PA, 15213, 
        {\tt\small gwu@andrew.cmu.edu}}%
}
\begin{document}
\setlength{\abovedisplayskip}{0pt}
\setlength{\belowdisplayskip}{0pt}
\setlength{\textfloatsep}{2pt}

\maketitle
\thispagestyle{empty}
\pagestyle{empty}

\begin{abstract}
We present the coordinate-free dynamics of three different quadrotor systems 
: (a) single quadrotor with a point-mass payload suspended through a flexible cable;
(b) multiple quadrotors with a shared point-mass payload suspended through flexible cables; and (c) multiple quadrotors with a shared rigid-body payload suspended through flexible cables.
We model the flexible cable(s) as a finite series of links with spherical joints with mass concentrated at the end of each link. The resulting systems are thus high-dimensional with high degree-of-underactuation.
For each of these systems, we show that the dynamics are differentially-flat, enabling planning of dynamically feasible trajectories.
For the single quadrotor with a point-mass payload suspended through a flexible cable with five links (16 degrees-of-freedom and 12 degrees-of-underactuation), we use the coordinate-free dynamics
to develop a geometric variation-based linearized equations of motion about a desired trajectory. We show that a finite-horizon linear quadratic regulator can be used to track a desired trajectory with a relatively large region of attraction.
%
%
%
%

\end{abstract}

\section{INTRODUCTION}

Aerial transportation through small unmanned aerial vehicles (UAVs) has shown great potential in recent years, especially with the commercialization of UAV-based package and mail delivery.
Consequently, the automatic control of quadrotors to transport payloads has been the focus for many research groups. 
Load carrying using quadrotor UAVs can be realized either by rigidly attaching the load to the quadrotor or suspending the load through cables. 
A rigidly attached load can increase the inertia of the quadrotor, making it sluggish for fast attitude response and agile disturbance rejection.
A cable-suspended load system increases the degrees of underactuation, making planning and control for such systems more challenging. 

Control of UAVs with a suspended load have been addressed through trajectory generation for fast load transport with minimized swing \cite{palunko2012trajectory}, \cite{zameroski2008rapid}, 
or through modeling the suspended load as an external disturbance and developing robust controllers to reject these disturbances \cite{pizetta2015modelling}. Geometric control design has been developed in \cite{sreenath2013geometric}, \cite{sreenath2013trajectory} to track a smooth aggressive trajectory.
Similar geometric controllers have been proposed in \cite{wu2014geometric}, which allows the load to undergo large swings. Similar controllers for suspended loads have been developed in \cite{sreenathRSS2013}, \cite{lee2013geometric}, \cite{wu2014geometric}, where the load is supported from multiple quadrotors.
\begin{figure}[t]
\centering
\includegraphics[width=\columnwidth]{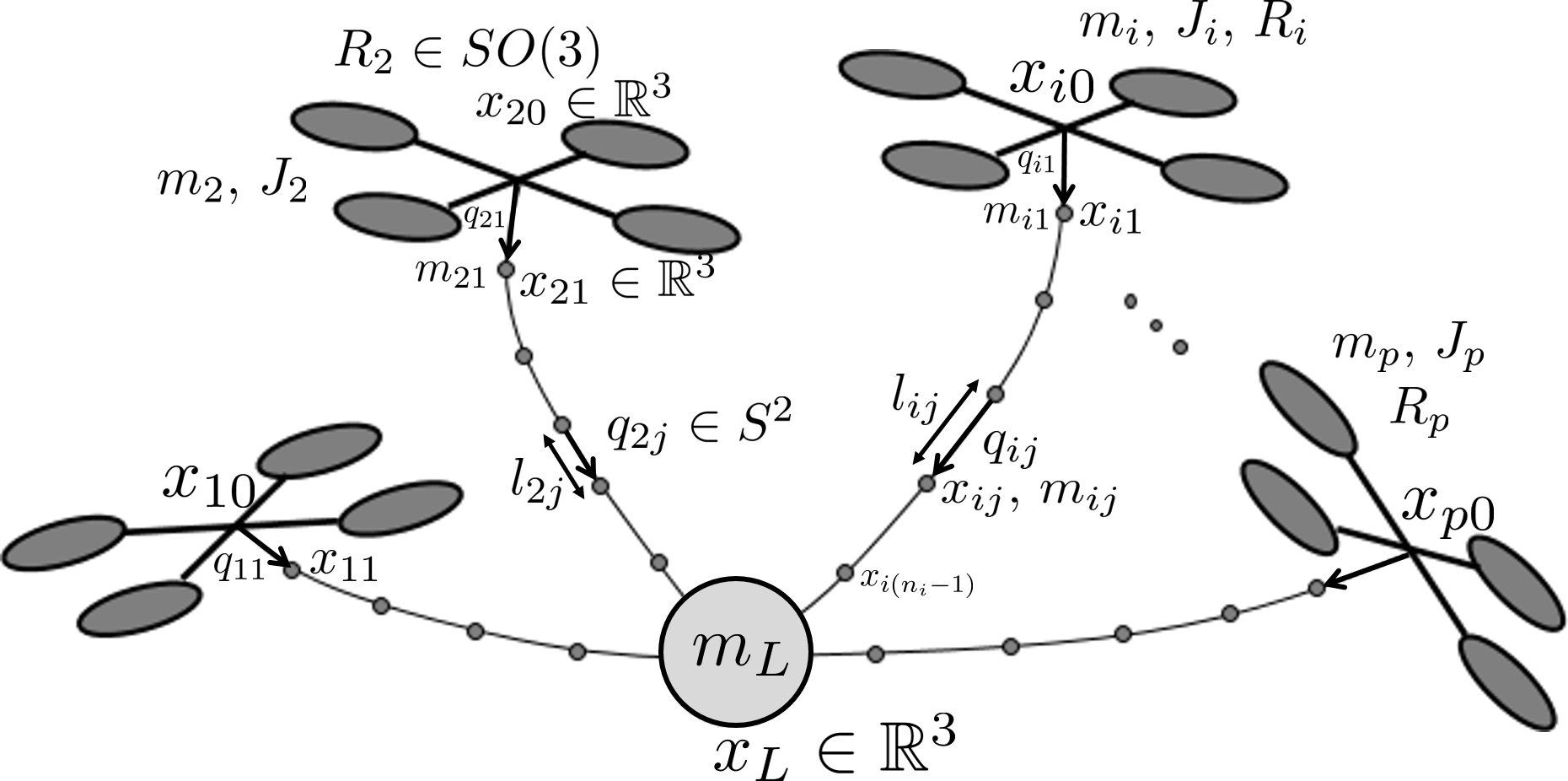}
\caption{Multiple quadrotors with a shared point-mass payload supsended through flexible cables.}
\label{fig:quad3}
\end{figure}

However, these controllers assume that the suspended cable is massless and that the cable is always taut. In particular, they do not address the control challenges when the cable is not taut or when the cable is deformed. These assumptions may not hold in reality, especially when the mass of the cable is comparable relative to the suspended load and/or is distributed or in cases where the tension in the cable is very small.
In this case, the stability of these controllers would get worse, and thus the mass distribution of the cable needs to be considered in the dynamics.
However, a continuous mass distribution would result in a configuration space of infinite dimension with the dynamics being represented through partial differential equations. 
To reduce the modeling complexity, a general methodology is to employ a finite element approximation for the cable, where the cable is approximated as a series of links connected by spherical joints \cite{goodarzi2014geometric,goodarzi2015geometric,lee2015geometric}.
Goodarzi et al. \cite{goodarzi2014geometric} first develops dynamics of a single quadrotor transporting a point-mass through a flexible cable.
Based on this, \cite{goodarzi2015dynamics} extends to the case of a rigid body load with multiple quadrotors.
Although both these work present a coordinate-free model and use linearization for regulation control, the resulting controller can only stabilize to a setpoint corresponding to the quadrotor hovering and the payload suspended vertically.  In particular, the developed controller is unable to track a desired trajectory.

In this paper, we focus on the properties of three particular transportation systems with flexible cables. With respect to the prior work in \cite{goodarzi2014geometric,goodarzi2015geometric} which proposes regulation control of the load's pose, our aim is to  investigate further into the planning and tracking of desired dynamically feasible trajectories for such systems.  The contributions of this paper with respect to prior work is as follows:
\begin{itemize}
\item We develop coordinate-free dynamics of three different quadrotor systems with a payload suspended through flexible cables using the Newton-Euler method.  We prove that the resulting dynamics for these systems are differentially-flat and provide flat outputs.
\item For the single quadrotor with point-mass payload suspended through a flexible cable, we present a geometric variation-based linearization of the system dynamics with respect to a desired reference trajectory. 
\item We use the linearized dynamics to develop a finite-horizon linear quadratic regulator and demonstrate trajectory tracking on the nonlinear system to achieve trajectory tracking of a sufficiently smooth reference trajectory of the load.  We demonstrate the large region of attraction of the controller through numerical simulations. 
\end{itemize}

The rest of the paper is organized as follows. Section \ref{sec:dynamics} presents the dynamical models and assumptions of the described three quadrotor systems with flexible cables. Section \ref{sec:diff-flat} demonstrates that these systems are differentially flat. Section \ref{sec:control} develops the linearized dynamics and presents a finite-horizon linear quadratic regulator to achieve the trajectory tracking for load suspended form a quadrotor. Section \ref{sec:simulation} presents the simulation results for load trajectory tracking, and Section \ref{sec:conclusion} provides concluding remarks.

\section{SYSTEM DYNAMICS}
\label{sec:dynamics}
In this section, we present the dynamical models for a single or multiple quadrotor systems with payload suspended through flexible cables, where an individual flexible cable is modeled as a series of $n$ small links as illustrated in Fig.~\ref{fig:quad3}$-$Fig.~\ref{fig:quad4}. 
We describe the coordinate-free dynamics for these systems using a rotation matrix in $SO(3):=\{R\in \mathbb{R}^{3\times 3}| R^TR=I, det(R) = +1 \}$ for quadrotor attitude, and a unit-vector in the two-sphere $S^2:=\{q\in \mathbb{R}^3|q.q=1\}$ for each of the $n$ links of the cable.

The configuration of the systems under consideration can be given by the pose of the load in inertial frame (position for point-mass load \& position and orientation for rigid-body load), attitudes of each link in the flexible cable(s) and the attitude of the quadrotor(s). Equations of motion are presented in Newton-Euler method which makes it convenient in the later sections.

\subsection{Quadrotor with load suspended through a flexible cable (Fig.~\ref{fig:quad1})}

\begin{figure}[t]
\centering
\includegraphics[scale=0.3]{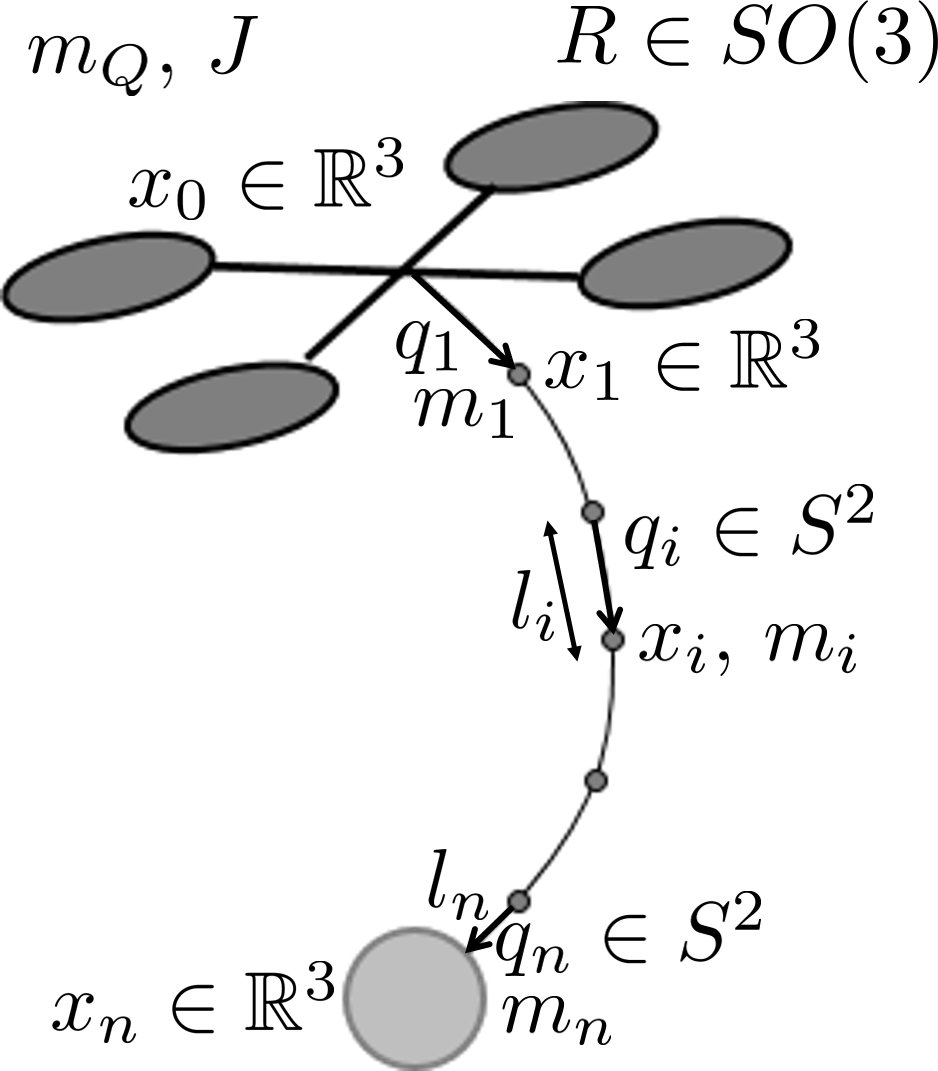}
\caption{Quadrotor with a point-mass payload suspended through a flexible cable.  The flexible cable is modeled as a series of links connected by $S^2$ joints. The system evolves on $SO(3)\times \mathbb{R}^3 \times (S^2)^n$ and  has $(6+2n)$ degrees of freedom with $(2+2n)$ degrees of underactuation.}	
\label{fig:quad1}
\end{figure}

The first system is a single quadrotor with load suspended through a flexible cable shown in Fig.~\ref{fig:quad1}. 
The flexible cable is modeled as a chain of $n$ links, and the suspended load is considered to be a point mass at the end of the $n^{th}$ link. 
The configuration space is given by $Q \coloneqq SO(3)\times\mathbb{R}^3\times(S^2)^n$. 
In the finite element approximation, we assume that the mass of each link is concentrated at the end of the link. 
The relation between the different link-mass positions, quadrotor center-of-mass and load is given as 
\begin{equation}
\label{eq:dynA}
x_i = x_{i-1} + l_iq_i, 
\end{equation}
where $i\in \{1,2,\hdots,n\}$ and $l_i$, $q_i$, $x_i$ are respectively the length, the unit directional vector and the position of the $i^{th}$ link; $x_0$ and $x_n$ are the positions of quadrotor center-of-mass and load; $R$ and $\Omega$ are the rotation matrix of the quadrotor and its body-fixed angular velocity.
Let $T_i \in \mathbb{R}$ be the magnitude of the tension in the $i^{th}$ link, the dynamics of the system can be written as follows,
\begin{align}
\label{eq:diff1}
m_Q(\ddot{x}_0 + ge_3) =&\ fRe_3 + T_1q_1,
\\
\label{eq:diff2}
m_j(\ddot{x}_j + ge_3) =&\ - T_jq_j + T_{(j+1)}q_{(j+1)},
\\
\label{eq:diff3}
m_n(\ddot{x}_n + ge_3) =&\ -T_nq_n,
\\
\label{eq:diff4}
J\dot{\Omega} + \hat{\Omega}J\Omega =&\ M,
\end{align}
$\forall$ $ j \in \{ 1,2,\hdots,(n-1)\}$ where $m_Q$, $J$, $f$ and $M$ are the mass, inertia matrix, thrust and the moment represented in body frame of the quadrotor, and $m_i$ is the mass of the $i^{th}$ link. (The hat-map $\hat{\cdot}:\mathbb{R}^3\rightarrow so(3)$ is defined, as, $\hat{x}y = x\times y$ for any $x,y\in \mathbb{R}^3$).	

\begin{remark}
This system has $6+2n$ degrees-of-freedom (DOF), with $4$ degrees of actuation from the thrust and moment $(f,M)$. Thus the degrees-of-underactuation (DOuA) for the system is $(2n+2)$.
\end{remark}


\begin{remark}
The assumption that the flexible cable is a series of connected links may not be valid under some extreme conditions.
However, this assumption offers more flexibility over the assumption of a single mass-less link and can be potentially used to design more aggressive trajectories that require cable deformation.
\end{remark}

\subsection{Point-mass load suspended from multiple quadrotors through flexible cables (Fig.~\ref{fig:quad3})}
The second system is a point mass load suspended by $p$ quadrotors through flexible cables as shown in Fig.~\ref{fig:quad3} where $p>1$. The configuration variables are the load position $x_L \in \mathbb{R}^3$, attitude of of each link in the flexible cable $q_{ij} \in S^2$ (here, $q_{ij}$ is the attitude of the $j^{th}$ link in the flexible cable attached the $i^{th}$ quadrotor) and attitude of the quadrotors $R_i \in SO(3)$. 
Positions of different links $x_{ij}$ and quadrotors $x_{i0}$ can be obtained from the kinematic relations given below,
\begin{align}
x_{ij} =&\ x_{i(j-1)} + l_{ij}q_{ij},  \label{eq:mQpoint0b} \\
x_L =&\ x_{i(n_i-1)} + l_{in_i}q_{in_i}, \label{eq:mQpoint0a} 
\end{align}

where $j\in \{ 1,\hdots,(n_i-1)\}$, $i \in \{1,2,\hdots,p\}$, $n_i$ is the number of links in the $i^{th}$ flexible cable, $l_{ij}$ is the length of the $j^{th}$ link of the $i^{th}$ flexible cable (i.e., the flexible cable attached to the $i^{th}$ quadrotor) and $x_L$ is the load position.

Multiple quadrotors and multiple links in each cable result in a complicated system with high degree-of-underactuation. 
The configuration space of this system is given as $Q\coloneqq\mathbb{R}^3\times \Pi_{j=1}^p \left(SO(3) \times (S^2)^{n_j}\right)$.
The corresponding system dynamics can be described in terms of internal tensions $T_{ij}>0$ shown below, 
\begin{align}
\label{eq:mQpoint1}
m_i(\ddot{x}_{i0} + ge_3) =&\  f_iR_ie_3 + T_{i1}q_{i1},
\end{align}
\begin{align}
\label{eq:mQpoint2}
m_{ij}(\ddot{x}_{ij}+ge_3) =&\  -T_{ij}q_{ij} + T_{i(j+1)}q_{i(j+1)},
\end{align}
\begin{align}
\label{eq:mQpoint3}
m_L(\ddot{x}_L + ge_3 )=&\  -\sum_{i=1}^{n}{T_{in_i}q_{in_i}},
\end{align}
\begin{align}
\label{eq:mQpoint4}
J_i\dot{\Omega}_i + \hat{\Omega}_iJ_i\Omega_i =&\ M_i,
\end{align}
for $i \in \{1,\hdots,n\},\,j\in \{1,\hdots,(n_i-1)\}$ where $m_L$ is mass of the load and $m_i$, $J_i$, $f_i$ \& $R_i$ are the mass, intertia matrix, thrust and rotation matrix of the $i^{th}$ quadrotor.
\begin{remark}
This system has $\big(3+ 3p+2\sum_1^p n_i\big)$ DOF and $4p$ actuators. Thus, DouA in the system is $\big(3-p+2\sum_1^p n_i\big)$. For the configuration in Fig.~\ref{fig:quad3} with four quadrotors \& five link cables, DOF = 55 and DOuA = 39. 
\end{remark}
\subsection{Rigid-body load suspended from multiple quadrotors through flexible cables (Fig.~\ref{fig:quad4})}
\begin{figure}[t]
\centering
\includegraphics[width=\columnwidth]{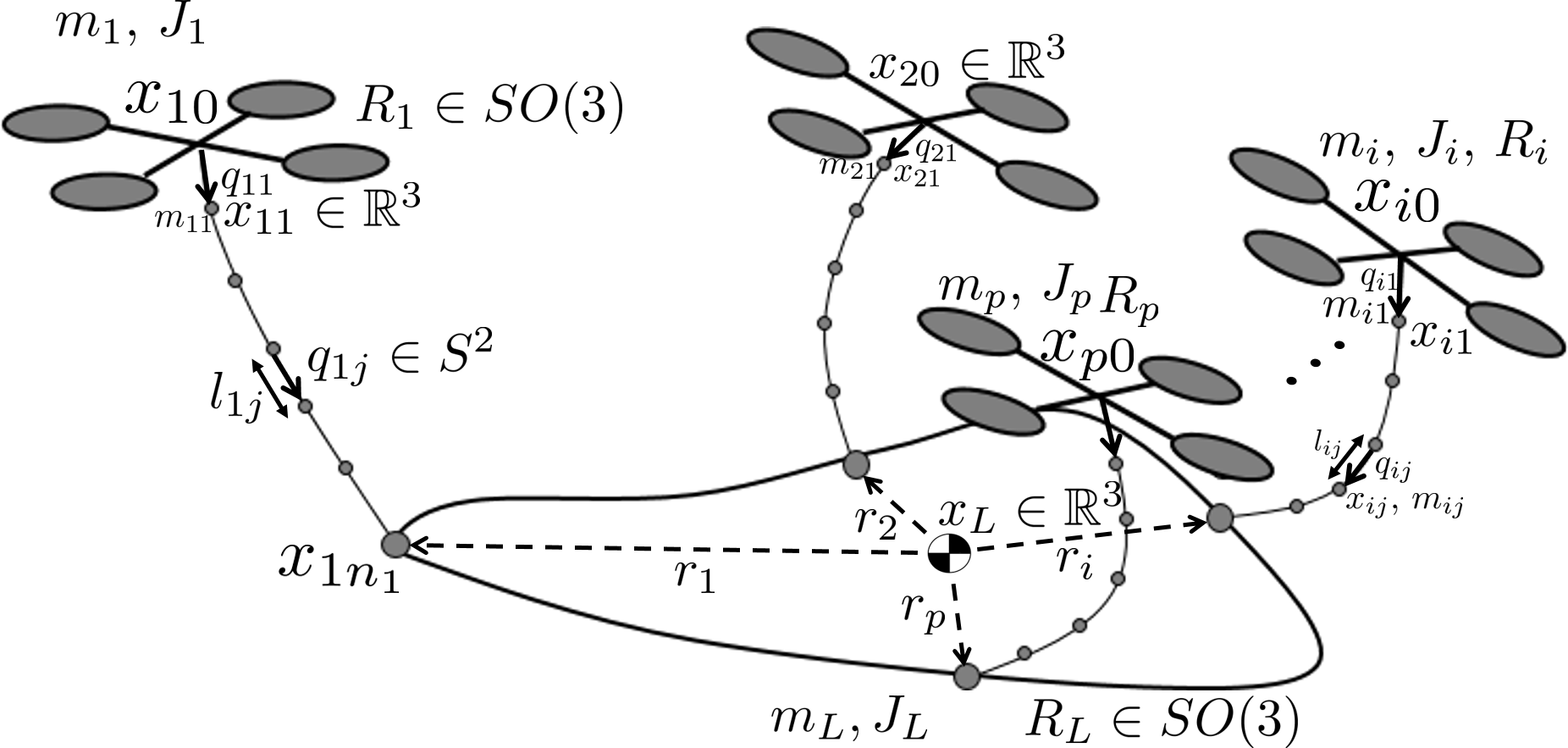}
\caption{Multiple quadrotors with a shared rigid-body payload supsended through flexible cables.}
\label{fig:quad4}
\end{figure}

We now consider the last system, where a rigid-body payload with mass $m_L$, inertia matrix $J_L$ and orientation $R_L$ in inertial frame, is suspended by $p$ quadrotors through flexible cables. Fig.~\ref{fig:quad4} shows the geometry of the load suspended from $p$ quadrotors by flexible cables. 
The kinematic relations between positions of cable links, quadrotors and the load position is given as follows,
\begin{align}
\label{eq:mQrigid0a}
x_{i(n_i-1)} =&\ x_L + R_Lr_i - l_{in_i}q_{in_i},
\end{align}
\begin{align}
\label{eq:mQrigid0b}
x_{i(j-1)} =&\ x_{ij} - l_{ij}q_{ij}, 
\end{align}
for $i \in \{1,\hdots,p\},\,j\in \{1,\hdots,(n_i-1)\}$ where $n_i$ is the number of links in the $i^{th}$ flexible cable, $x_L$ is the center-of-mass position of the load, $x_{i0}$ is the center-of-mass position of the $i^{th}$ quadrotor and $r_i$ is the vector (in the load body frame) from the load center of mass to the point of attachment of the last link of the $i^{th}$ flexible cable.

For the case of a rigid-body load, the degrees-of-freedom is increased by 3 due to the load attitude, compared to the case of point-mass load. 
Similar to the previous system, we can use the internal tensions $T_{ij}$ on the corresponding configuration space $Q\coloneqq{SE(3)} \times \Pi_{j=1}^p \left(SO(3) \times (S^2)^{n_j}\right)$ to express the dynamics shown below, 
\begin{align}
\label{eq:mQrigid1}
m_{i}\ddot{x}_{i0} =&\ f_iR_ie_3 - m_ige_3 + T_{i1}q_{i1},
\end{align}
\begin{align}
\label{eq:mQrigid2}
J_i\dot{\Omega}_i + \hat{\Omega}_iJ_i\Omega_i =&\ M_i,
\end{align}
\begin{align}
\label{eq:mQrigid3}
m_{ij}(\ddot{x}_{ij}+ge_3) =&\ -T_{ij}q_{ij} + T_{i(j+1)}q_{i(j+1)},
\end{align}
\begin{align}
\label{eq:mQrigid4}
m_L\ddot{x}_L =&\ -\sum_{i=1}^n{T_{in_i}q_{in_i}}-m_Lge_3,
\end{align}
\begin{align}
\label{eq:mQrigid5}
J_L\dot{\Omega}_L + \hat{\Omega}_LJ_L\Omega_L =&\ -\sum_{i=1}^n(r_i \times R_L^T T_{in_i}q_{in_i}),
\end{align}
for $i \in \{1,\hdots,p\},\,j\in \{1,\hdots,(n_i-1)\}$ and all other symbols with same representation as in the previous subsection. 

\begin{remark}
This system has $\big(6+3p+2\sum_1^p n_i\big)$ DOF and has $4p$ inputs and thus, $\big(6-p+2\sum_1^p n_i\big)$ DOuA. For the configuration shown in Fig.~\ref{fig:quad4}, with four quadrotors and 5 link cables, DOF = 58 and DOuA = 42.
\end{remark}

Having presented the dynamical models of the three systems considered in the paper, we next show that these systems are differentially-flat.



\section{DIFFERENTIAL FLATNESS}
\label{sec:diff-flat}

Differential flatness is the property of nonlinear systems, which identifies certain flat outputs for the system, such that all the system states and the inputs can be expressed as smooth functions of flat outputs and a finite number of their derivatives. 

Differentially flat systems have properties which can be used for feedback linearization. This concept has been previously exploited, to develop trajectories and achieve trajectory tracking control for loads suspended from quadrotors in \cite{mellinger2011minimum}, \cite{sreenath2013geometric} and \cite{sreenath2013trajectory}. 
According to \cite{murray1995differential}, differential flatness is defined as,
\begin{defn}
\label{defn:1}
{\it Differentially-Flat System \cite{murray1995differential}}: A system $\dot{\mathbf{x}} = f(\mf{x},\mf{u}),\,\mf{x} \in \mathbb{R}^n,\,\mf{u}\in \mathbb{R}^m,\,$ is differentially flat if there exists flat outputs $\mf{y}\in \mathbb{R}^m$ of the form $\mf{y} = \mf{y}(\mf{x},\mf{u},\dot{\mf{u}},\hdots,\mf{u}^{(\mf{p})})$ such that the states and the inputs can be expressed as $\mf{x} = \mf{x}(\mf{y},\dot{\mf{y}},\hdots,\mf{y}^{\mf{(q)}})$, $\mf{u} = \mf{u}(\mf{y},\dot{(\mf{y})},\hdots,\mf{y}^{(\mf{q})})$, where $\mf{p},\,\mf{q}$ are nonnegative integers.
\end{defn}
The following subsections show the differential flatness for different systems described in the previous section. Note that our work is different from \cite{murray1996trajectory} where the aerial agent dynamics are approximated as a fully-actuated point-mass.

\subsection{Quadrotor with load suspended through a flexible cable}

\begin{lemma}
\label{lemma:1} $\pazocal{Y} = (x_n, \psi)$ are the set of flat-outputs for the quadrotor with point mass load suspended through flexible cables, where $x_n \in \mathbb{R}^3$ is the position of the load (the $n^{th}$ point mass) and $\psi \in \mathbb{R}$ is the yaw angle of the quadrotor.
\end{lemma}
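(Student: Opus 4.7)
The plan is to work backwards along the cable starting from the load, recursively solving for each link's direction and tension, and then recover the quadrotor state and inputs. The key observation is that the load equation \eqref{eq:diff3} isolates $T_n q_n$, and each subsequent link equation isolates $T_i q_i$ once the higher-indexed quantities are already known.

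First, from \eqref{eq:diff3} we have $T_n q_n = -m_n(\ddot{x}_n + g e_3)$. Since $q_n \in S^2$ and $T_n \in \mathbb{R}$, we set
\begin{equation*}
q_n = -\frac{\ddot{x}_n + g e_3}{\|\ddot{x}_n + g e_3\|}, \qquad T_n = m_n\,\|\ddot{x}_n + g e_3\|,
\end{equation*}
which requires $\ddot{x}_n + g e_3 \neq 0$ (a standard non-degeneracy assumption on the trajectory). Then \eqref{eq:dynA} gives $x_{n-1} = x_n - l_n q_n$, which is an explicit function of $x_n$ and its first two derivatives. Iterating, I would proceed by induction on $j$ decreasing from $n$ down to $1$: assuming $x_j, \dot{x}_j, \ddot{x}_j$ and $T_{j+1}q_{j+1}$ have been expressed in terms of finitely many derivatives of $x_n$, the link equation \eqref{eq:diff2} yields
\begin{equation*}
T_j q_j \;=\; T_{j+1} q_{j+1} \;-\; m_j(\ddot{x}_j + g e_3),
\end{equation*}
from which $q_j$ and $T_j$ are recovered exactly as above, and then $x_{j-1} = x_j - l_j q_j$ follows. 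Each step costs two additional derivatives of $x_n$, so $q_j$ and $x_{j-1}$ depend on derivatives of $x_n$ up to order $2(n-j+1)$.

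Having arrived at $x_0$ and $T_1 q_1$ as functions of $x_n$ up to order $2n$, the quadrotor translational equation \eqref{eq:diff1} gives the net thrust vector
\begin{equation*}
f R e_3 \;=\; m_Q(\ddot{x}_0 + g e_3) \;-\; T_1 q_1,
\end{equation*}
which is determined by derivatives of $x_n$ up to order $2n+2$. Its magnitude is $f$ and its normalization is the body-$z$ axis $b_3 = Re_3$. Combining $b_3$ with the yaw flat output $\psi$ via the standard construction (choosing $b_1^{\mathrm{des}} = [\cos\psi,\sin\psi,0]^T$, projecting, and setting $b_2 = b_3 \times b_1$, $b_1 = b_2 \times b_3$) fully determines $R \in SO(3)$ as a function of $\psi$ and the same derivatives of $x_n$. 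Differentiating $R$ once gives $\hat{\Omega} = R^T \dot{R}$, and differentiating \eqref{eq:diff4} once more yields $M = J\dot\Omega + \hat\Omega J \Omega$. All remaining states ($\dot{x}_i$, $\dot{q}_i$) are obtained by straightforward differentiation of the already expressed quantities.

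The main obstacle is bookkeeping rather than a conceptual hurdle: one must verify that each recursive step produces a $q_j$ that is truly unit-norm (which follows automatically from the construction, provided the right-hand side is nonzero) and that enough derivatives of $x_n$ exist, giving the smoothness requirement on the reference trajectory. I would also note the sign choice in the square-root/normalization, which is fixed by the physical requirement $T_j > 0$ (the cables are in tension), and flag the singular trajectories where some $T_j q_j$ vanishes as the configurations excluded from the flat parametrization. No additional dynamical equation is violated, because the $n+1$ translational equations together with the kinematic relations \eqref{eq:dynA} have been used exactly once each, and the attitude equation \eqref{eq:diff4} is used only to define the input $M$.
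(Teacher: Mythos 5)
Your proof is correct and follows essentially the same route as the paper's: recover $T_nq_n$ from the load equation, normalize to get $q_n$ and $T_n$, recurse backwards through the link equations and kinematic relations to reach $x_0$ and $T_1q_1$, and then invoke the standard quadrotor flatness construction with the yaw angle to obtain $R$, $\Omega$, $f$, and $M$. The only difference is that you spell out the non-degeneracy conditions, the derivative bookkeeping (consistent with the paper's count of $2n+4$ derivatives of $x_n$), and the final attitude construction explicitly, where the paper simply cites the known flatness of $(x_0,\psi)$ for a bare quadrotor.
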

\begin{proof}
The tension vector in the $n^{th}$ link, $\vec{T}_n = T_nq_n$, can be calculated from \eqref{eq:diff3} since the $\ddot{x}_n$ is known from the flat-output $x_n$. The unit vector along the $n^{th}$ link and magnitude of the tension $T_n$ can be determined as $q_n = (T_nq_n)/\lVert T_nq_n \rVert$ and $T_n = (T_nq_n).q_n$. Tensions in all the remaining $(n-1)$ links can be calculated from \eqref{eq:diff2} iteratively. Positions of all other links and quadrotor position can be determined from \eqref{eq:dynA}. Since $(x_0, \psi)$ are the flat-outputs of a quadrotor \cite{mellinger2011minimum}, the rest of the states  $(R, \Omega)$ and inputs $(f, M)$ can be calculated. 
\end{proof}
\begin{remark}
To completely define and calculate all the states and inputs of the above system with $n-$ chain links, requires $(2n + 4)$ derivatives of the flat-output $x_n$ and $2^{nd}$ derivative of the yaw angle $\psi$.
\end{remark}
\begin{cor}
\label{cor}
$\pazocal{Y} = (x_n, \psi, \vec{F})$ are the flat-outputs for quadrotor with point mass load suspended via flexible cable with an external force $(\vec{F})$ acting on the point-mass load, where $x_n \in \mathbb{R}^3$ is the position of the load ($n^{th}$ point-mass) and $\psi \in \mathbb{R}$ is the yaw angle of the quadrotor.
\end{cor}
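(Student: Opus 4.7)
The plan is to mirror the argument of Lemma~\ref{lemma:1}, modifying only the step at the load so that the external force can be absorbed into the recovery of the tension in the last link. First I would rewrite the dynamics of the terminal point-mass, replacing \eqref{eq:diff3} with
\begin{equation*}
m_n(\ddot{x}_n + ge_3) = -T_n q_n + \vec{F},
\end{equation*}
which now includes the exogenous force $\vec{F}$ acting on the load. Solving for the tension vector in the $n^{th}$ link gives $\vec{T}_n := T_n q_n = \vec{F} - m_n(\ddot{x}_n + ge_3)$. The right-hand side is a smooth function of the enlarged flat-output set $\pazocal{Y} = (x_n,\psi,\vec{F})$ and its derivatives: $\ddot{x}_n$ comes from differentiating the flat output $x_n$ twice, and $\vec{F}$ is itself part of $\pazocal{Y}$. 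Under the standing non-degeneracy assumption $\vec{T}_n \neq 0$ (i.e., the terminal link is not slack), I can then extract $q_n = \vec{T}_n / \lVert \vec{T}_n \rVert$ and $T_n = \vec{T}_n \cdot q_n$, exactly as in the proof of Lemma~\ref{lemma:1}.

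Next I would propagate up the chain without modification. The kinematic relation \eqref{eq:dynA} yields $x_{n-1} = x_n - l_n q_n$, so differentiating twice produces $\ddot{x}_{n-1}$ in terms of derivatives of the flat outputs. Substituting into \eqref{eq:diff2} with $j=n-1$ gives $\vec{T}_{n-1} = T_{n-1} q_{n-1} = T_n q_n + m_{n-1}(\ddot{x}_{n-1}+ge_3)$, from which $q_{n-1}$ and $T_{n-1}$ are obtained as before. Iterating this recursion recovers every link direction $q_j$, every tension $T_j$, and ultimately the quadrotor center-of-mass position $x_0$ and its derivatives. Finally, since $(x_0,\psi)$ is a flat-output pair for the bare quadrotor \cite{mellinger2011minimum}, equations \eqref{eq:diff1} and \eqref{eq:diff4} produce $R$, $\Omega$, and the inputs $(f,M)$ as smooth functions of the flat outputs and their derivatives.

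The only place where the external force changes the structure of the argument is the very first step, at the load, so conceptually the hardest (and only new) part is ensuring that adding $\vec{F}$ to $\pazocal{Y}$ is indeed sufficient and necessary: sufficient because $\vec{T}_n$ now depends on $\vec{F}$, necessary because without knowing $\vec{F}$ one cannot solve for $T_n q_n$ from $\ddot{x}_n$ alone. I would also note that the maximum order of differentiation required increases: since each step up the chain differentiates the preceding quantities twice, and $\vec{F}$ first enters at the load, recovering the quadrotor inputs $(f,M)$ will require up to $(2n+4)$ derivatives of $\vec{F}$, matching the $(2n+4)$ derivatives of $x_n$ already needed in Lemma~\ref{lemma:1} and two derivatives of $\psi$. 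The main subtlety, as in the original lemma, is the implicit non-degeneracy assumption that no cable link is slack along the reference trajectory, so that each $q_j$ is well defined by normalization.
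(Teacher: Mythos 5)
Your proposal is correct and follows essentially the same route the paper intends: the paper gives no separate proof for the corollary, treating it as the obvious modification of Lemma~\ref{lemma:1} in which the load equation \eqref{eq:diff3} gains the term $\vec{F}$, so that $T_nq_n = \vec{F} - m_n(\ddot{x}_n+ge_3)$ and the upward recursion proceeds unchanged. One minor bookkeeping slip: since $\vec{F}$ enters at the same level as $\ddot{x}_n$ (not $x_n$), only $(2n+2)$ derivatives of $\vec{F}$ are needed, not $(2n+4)$ --- consistent with the paper's remark after Lemma~\ref{lemma:3} requiring the $(2+2n_i)^{th}$ derivative of the terminal tensions, which play the same role.
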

\begin{remark}
$\vec{F}$ in Corollary \ref{cor}, is an input to the system and is also a flat output. Note that from Definition \ref{defn:1}, the flat-output is a function of inputs, i.e, $\mf{y} = \mf{y}(\mf{x},\mf{u},\dot{\mf{u}},\hdots,\mf{u}^{(\mf{p})})$. Thus, for the system in Corollary \ref{cor}, the number of flat-outputs $=7$, ($x_n\in \mathbb{R}^3$, $\psi\in \mathbb{R}$, $\vec{F}\in \mathbb{R}^3$), and is equal to the number of inputs $=7$, ($M\in \mathbb{R}^3$, $f\in \mathbb{R}$, $\vec{F}\in \mathbb{R}^3$).
\end{remark}
%
%
%
\subsection{Point-mass load suspended from multiple quadrotors $(p \geq 1) $ through flexible cables with $(n_i\geq 1)$ links for $i\in\{1,2,...,n\}$.}
\begin{lemma}
\label{lemma:3}
${\bf \pazocal{Y}} = (x_L, T_{in_i}q_{in_i}, \psi_j)$ for $i \in \{2,\hdots,p\}$ and $j \in \{1,\hdots,p\}$ are the flat-outputs for the given system, with $\vec{T}_{in_i} = T_{in_i}q_{in_i} \in \mathbb{R}^3$ the tension in the last link of $(p-1)$ cables and $\psi_j \in \mathbb{R}$ the yaw angle of the quadrotors. $x_L \in \mathbb{R}^3$ is the load position.
\end{lemma}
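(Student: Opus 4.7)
The plan is to adapt the iterative reconstruction from the proof of Lemma~\ref{lemma:1} to the multi-cable setting. The only new difficulty is that the prescribed flat outputs contain the tension vectors at the load end for only $p-1$ of the $p$ cables; the remaining one must be recovered from the load's equation of motion before the per-cable iteration can begin.

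First, since $x_L$ is a flat output, $\ddot{x}_L$ is available by differentiation, and \eqref{eq:mQpoint3} is an algebraic relation among the $p$ tension vectors $T_{in_i}q_{in_i}$. Because $T_{in_i}q_{in_i}$ for $i=2,\ldots,p$ are already supplied as flat outputs, the missing vector is determined uniquely as
\[ T_{1n_1}q_{1n_1} \;=\; -m_L(\ddot{x}_L + g e_3) - \sum_{i=2}^{p} T_{in_i}q_{in_i}. \]
From each of the $p$ tension vectors one then extracts $q_{in_i}=(T_{in_i}q_{in_i})/\lVert T_{in_i}q_{in_i}\rVert$ and $T_{in_i}=(T_{in_i}q_{in_i})\cdot q_{in_i}$, and the kinematic relation \eqref{eq:mQpoint0a} gives the position $x_{i(n_i-1)}$ of the last link-mass of cable $i$.

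Second, the $p$ cables are now decoupled. For each $i$, differentiating $x_{i(n_i-1)}$ twice and substituting into \eqref{eq:mQpoint2} yields $T_{i(n_i-1)}q_{i(n_i-1)}$, and iterating upward along the cable exactly as in Lemma~\ref{lemma:1} produces every attitude $q_{ij}$, every tension $T_{ij}$, and ultimately the quadrotor position $x_{i0}$ via \eqref{eq:mQpoint0b}. For each quadrotor, the pair $(x_{i0},\psi_i)$ is the standard quadrotor flat-output set \cite{mellinger2011minimum}, from which $R_i$, $\Omega_i$, $f_i$, and $M_i$ follow by finitely many further differentiations of \eqref{eq:mQpoint1} and \eqref{eq:mQpoint4}.

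The main subtlety, as in Lemma~\ref{lemma:1}, is the implicit regularity assumption that each reconstructed tension vector is nonzero so that the normalization yielding $q_{in_i}$ is well defined; this is the non-degenerate, taut-cable condition that any reference trajectory produced by the flatness inversion must satisfy. Provided it holds, the argument is essentially Lemma~\ref{lemma:1} applied $p$ times in parallel after one algebraic decoupling step at the load.
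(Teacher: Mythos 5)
Your proof is correct and follows essentially the same route as the paper's: recover the missing tension vector $T_{1n_1}q_{1n_1}$ from the load equation \eqref{eq:mQpoint3}, obtain link directions and positions from the kinematic relations, and then reduce to the single-quadrotor argument of Lemma~\ref{lemma:1} (the paper also cites Corollary~\ref{cor} to cover the external force each shared-load reaction exerts on a cable, which your per-cable iteration handles implicitly). Your added remarks on the normalization formulas and the nonzero-tension regularity condition only make explicit what the paper leaves implicit.
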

\begin{proof}
From flat-output $x_L$ and its higher derivatives we can calculate $\sum{T_{in_i}q_{in_i}}$ from ~\eqref{eq:mQpoint3}. Knowing the values of $T_{in_i}q_{in_i}$ and its higer derivatives for $i\in \{2,\hdots,n\}$ we can calculate the value of $T_{1n_1}q_{1n_1}$ and its higher derivatives. Positions of different links of the systems can be calculated from \eqref{eq:mQpoint0b}-\eqref{eq:mQpoint0a}. Thus, we know the positions and tensions of the last link and their derivatives. 
The rest of the proof follows from Lemma \ref{lemma:1} and Corollary \ref{cor}.
\end{proof}
\begin{remark}
To completely describe all states and inputs of the above system as a function of flat-outputs requires, $(4+ 2n_{max})$, {\it(where, $n_{max} = \max\{n_1,\hdots,n_p\}$)}, derivate of $x_L$, $(2 + 2n_i)^{th}$ derivative of  $T_{in_i}q_{in_i}$ for $i\in \{2,\hdots,p \}$ and $2^{nd}$ derivative of $\psi_j$.
\end{remark}
\subsection{Rigid body load suspended from multiple quadrotors $(p \geq 1) $  through flexible cables with $(n_i \geq 1) $ links for $i\in \{1,\hdots,n\}$}

\begin{lemma}
\label{lemma:4}
$\pazocal{Y} = (x_L, R_L, \Lambda, \psi_j)$ for $j \in \{1,\hdots,p\}$ $(p\geq 3)$ is a set of flat outputs for the given system, where $\Lambda \in \mathbb{R}^{3p-6}$ satisfies, 
\begin{equation}
\label{eq:gensol}
\mathbb{T} = \Phi^\dagger W + N\Lambda
\end{equation}
with $\mathbb{T}, W$ defined as
\begin{equation}
\mathbb{T} = \begin{bmatrix}
R_L^TT_{1n_1}q_{1n_1} \\ R_L^TT_{2n_2}q_{2n_2} \\ \vdots \\ R_L^TT_{pn_p}q_{pn_p}
\end{bmatrix}_{3p\times 1}, W = -\begin{bmatrix}
R_L^Tm_L(\ddot{x}_L + ge_3) \\ J_L\dot{\Omega}_L + \hat{\Omega}_LJ_L\Omega_L
\end{bmatrix}_{6\times 1}
\end{equation}
and $\Phi^\dagger, N$ are respectively the Moorse-Penrose generalized inverse and the nullspace of 
\begin{equation}
\Phi = \begin{bmatrix}
I & I & \hdots & I \\ \hat{r}_1 & \hat{r}_2 & \hdots & \hat{r}_n 
\end{bmatrix}_{6\times 3p}
\end{equation}
provided that both $\Phi^\dagger_{3p\times 6}$ and $N_{3p\times (3p-6)}$ exist. 
\end{lemma}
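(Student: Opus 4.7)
The plan is to mimic the strategy used for the point-mass cases: solve the load-level equations first for the last-link tension vectors, then propagate outward along each cable, and finally invoke the standard quadrotor flat-output construction on each quadrotor-cable pair.

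First, I would rewrite the load translational equation \eqref{eq:mQrigid4} and rotational equation \eqref{eq:mQrigid5} in the load body frame. Premultiplying \eqref{eq:mQrigid4} by $R_L^T$ and writing the cross products in \eqref{eq:mQrigid5} with the hat map produces precisely the linear system $\Phi \mathbb{T} = W$ stated in the lemma. Since $x_L$ and $R_L$ are among the flat outputs, both $W$ and as many higher derivatives as I need can be computed purely from $(x_L, R_L)$ and a finite number of their time derivatives (using $\hat{\Omega}_L = R_L^T \dot R_L$ and its derivatives).

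Next, under the hypothesis that $\Phi^\dagger$ and $N$ have the stated dimensions --- equivalently, that $\Phi$ has full row rank $6$ --- I have $\Phi \Phi^\dagger = I_6$, so $\Phi^\dagger W$ is a particular solution and the general solution is the affine parametrization $\mathbb{T} = \Phi^\dagger W + N\Lambda$ with free parameter $\Lambda \in \mathbb{R}^{3p-6}$. Treating $\Lambda$ as an additional flat output then makes $\mathbb{T}$, and therefore each last-link tension vector expressed in the load frame, an explicit function of flat outputs and their derivatives. Mapping back to the inertial frame via $T_{in_i} q_{in_i} = R_L [\mathbb{T}]_i$, where $[\mathbb{T}]_i$ denotes the $i$-th $3\times 1$ block, yields $q_{in_i}$ and $T_{in_i}$ by normalization, and the position of the top of the last link follows from \eqref{eq:mQrigid0a}. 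From this point the argument is exactly the iteration used in Lemma \ref{lemma:1}, applied separately to each cable-quadrotor pair: iterating \eqref{eq:mQrigid3} from the end inward gives all intermediate $T_{ij} q_{ij}$ and, via \eqref{eq:mQrigid0b}, the quadrotor center of mass $x_{i0}$; finally, $(R_i, \Omega_i, f_i, M_i)$ follows from $(x_{i0}, \psi_i)$ by the standard quadrotor flat-output construction invoked in Lemma \ref{lemma:1}.

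The main obstacle is entirely concentrated in the second step: existence of the decomposition requires that $\Phi$ be surjective, which is the geometric non-degeneracy condition on the attachment vectors $\{r_i\}_{i=1}^{p}$ that is hypothesized by the lemma and which in particular forces $p \geq 3$ (otherwise $3p < 6$ and the row-rank condition cannot hold). I would additionally need to remark that $\Lambda(t)$ must be chosen sufficiently smooth so that all downstream derivatives exist, and that physical realizability demands the resulting tensions $T_{ij}$ stay positive along the trajectory; neither of these is automatic from the parametrization and both constrain the admissible choice of $\Lambda$.
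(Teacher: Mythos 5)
Your proposal is correct and follows essentially the same route as the paper: form the load-frame system $\Phi\mathbb{T}=W$ from \eqref{eq:mQrigid4}--\eqref{eq:mQrigid5}, parametrize the general solution as $\Phi^\dagger W + N\Lambda$ with $\Lambda$ promoted to a flat output, and then propagate along each cable back to the quadrotors exactly as in Lemma~\ref{lemma:1}. Your added caveats on the required smoothness of $\Lambda(t)$ and the positivity of the resulting tensions are reasonable refinements the paper leaves implicit, but they do not change the argument.
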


\begin{proof}From \eqref{eq:mQrigid4} and \eqref{eq:mQrigid5}, we get,
\begin{equation}
\label{eq:tensions}
 -\begin{bmatrix}
R_L^Tm_L(\ddot{x}_L + ge_3) \\ J_L\dot{\Omega}_L + \hat{\Omega}_LJ_L\Omega_L
\end{bmatrix} = \Phi\begin{bmatrix}
R_L^TT_{1n_1}q_{1n_1} \\ R_L^TT_{2n_2}q_{2n_2} \\ \vdots \\ R_L^TT_{nn_n}q_{nn_n}
\end{bmatrix}.
\end{equation}
Proof follows from [Lemma 2, \cite{sreenathRSS2013}], where the tensions for the last links $(T_{in_i}q_{in_i})$ of each flexible cable are calculated. Note that the general solution to \eqref{eq:tensions}, is \eqref{eq:gensol}.
To compute the tensions and their higher order derivatives, we need the time-invariant matrices $\Phi^{\dagger}$ and $N$. Here, $\Phi^{\dagger} = (\Phi^T\Phi)^{-1}\Phi^T$ and $N$ is matrix whose columns span the kernel of $\Phi$, representing the constraints on the internal forces in the system.
Positions for links of cable can be calculated from ~\eqref{eq:mQrigid0a} and ~\eqref{eq:mQrigid0b}. Knowing position and tensions in the last link for each cable, from Lemma \ref{lemma:1}, rest of the states can be calculated. 
\end{proof}

\begin{remark}
Calculation of all the states and inputs for the system requires upto $2^{nd}$ derivative of $\psi_j$and $(2+2n_{max})$ {\it (where, $n_{max} = \max\{n_1,\hdots,n_p\}$)} derivates of $\mathbb{T}$,  which in turn depends on $W\,\&\,\Lambda$. Thus, we require $(2+2n_{max})$ derivatives of $\Lambda$ and $(4+2n_{max})$ derivatives of $x_L\, \&\, R_L$.
\end{remark}

So far we have discussed about differential flatness in different quadrotor-load with flexible cable systems. 
In the next section, we linearize the dynamics of load suspended from quadrotor using flexible cable about a specific desired time-varying trajectory. This linearization is performed directly on the manifolds and thus is singularity free.

%
%
%
%
%
%
%
%
%

\section{Control Design of a Single Quadrotor with Point-mass Load Suspended through A Flexible Cable}
\label{sec:control}
We have previously shown the equations of motion for quadrotor with a load supended through a flexible cable in \eqref{eq:diff1}-\eqref{eq:diff4}.
Designing a controller based on this model is not feasible since the values of each tension vector remain unknown.
Thus for the purpose of control, we use instead the compact geometric equations of motion developed in \cite{goodarzi2014geometric}. The system dynamics are linearized about a time-varying reference trajectory to obtain a linear time-varying dynamical model. However, since the system evolves on a complex manifold with the configuration space $Q\coloneqq\mathbb{R}^3\times SO(3)\times (S^2)^n$, standard linearization techniques is cumbersome to implement and involves complex calculations using local variables resulting in singularites. Variation based geometric linearization is used to overcome these difficulties, as discussed in \cite{Access2015_VariationLinearization}.
\subsection{Linearized Dynamics}
The equations of motion given in \eqref{eq:diff3}$-$\eqref{eq:diff4} is converted to a new compact representation as developed in \cite{goodarzi2014geometric} and  given below,
\begin{gather}
\begin{bmatrix}
M_{00}& M_{01} & M_{02}&\hdots &M_{0n}\\
-\hat{q}^2_1M_{10}& M_{11}I_3& -M_{12}\hat{q}^2_1&\hdots &-M_{1n}\hat{q}^2_1\\
-\hat{q}^2_2M_{20}& -M_{21}\hat{q}^2_2& M_{22}I_3 &\hdots &-M_{2n}\hat{q}^2_2\\
\hdots & \hdots & \hdots && \hdots\\
-\hat{q}^2_nM_{n0}& -M_{n1}\hat{q}^2_n& -M_{n2}\hat{q}^2_n& \hdots & M_{nn}I_3
\end{bmatrix}\begin{bmatrix}
\ddot{x}_0 \\ \ddot{q}_1 \\ \ddot{q}_2 \\ \hdots \\ \ddot{q}_n
\end{bmatrix}\nonumber \\
=\begin{bmatrix}
fRe_3 - M_{00}ge_3 \\ 
-\|\dot{q}_1\|^2M_{11}q_1 + \sum^n_{a=1}m_agl_1\hat{q}^2_1e_3 \\
-\|\dot{q}_2\|^2M_{22}q_2 + \sum^n_{a=2}m_agl_1\hat{q}^2_2e_3 \\
\hdots \\
-\|\dot{q}_n\|^2M_{nn}q_n + m_ngl_1\hat{q}^2_ne_3
\end{bmatrix}, \label{eq:tlee1} \\
\dot{q}_i = \omega_i \times q_i.\label{eq:tlee2}
\end{gather}
Using this compact representation, we linearize the dynamics about a desired trajectory using the variation techniques discussed in \cite{Access2015_VariationLinearization}. 
We list all the error states of the system as $s=\{ \eta,\, \delta \Omega,\, \delta x_0,\, \xi_1,\, \hdots ,\, \xi_n ,\,\delta v_0,\, \delta \omega_1,\,\hdots ,\,\delta \omega_n \}$, where $(\xi_i,\,\delta \omega_i)$ correspond to the linear error state  approximation for the direction vector of the $i^{th}$ link. Similarly, $\eta$ and $\delta\Omega$ are the linear error states approximations for attitude and body-angular velocity of the quadrotor. Detailed discussion about the error states and variations is presented in Appendix A. 
The linearized dynamics are given in the following equations: 
\begin{align}
\label{eq:linearEq}
\dot{s} =&\ A(t)s + B(t)\delta u,
\end{align}
\begin{align}
\label{eq:linearEqConst}
C(t)s =&\ 0,
\end{align}
where the state and the input are,
\begin{align}
\label{eq:linearEq1}
\begin{split}
s =&\ \begin{bmatrix}
\eta \, \delta \Omega \,\,  \delta x_0\,\,  \xi_1\,\,  \hdots \,\,  \xi_n \,\,  \delta v_0\,\, \delta \omega_1\,\,\hdots \,\,\delta \omega_n 
\end{bmatrix}^T\in \mathbb{R}^{12+6n},
\end{split}
\end{align}
\begin{align}
\label{eq:linearEq2}
\delta u =&\ \begin{bmatrix}
\delta f & \delta M
\end{bmatrix}^T\in \mathbb{R}^{m} = \mathbb{R}^4,
\end{align}
where the expressions of $A(t), B(t), C(t)$ are given in Appendix B and their derivation in Appendix C. $C(t)$ reflects the state constraints introduced due to the geometric structure of the manifold. More on this can be found in \cite{Access2015_VariationLinearization}.  
\begin{remark}
For a given load trajectory as a function of time, the desired states and feed forwards inputs $u_d = [f_d, M_d]^T$ can be calculated using the flat-ouputs as discussed in Lemma~\ref{lemma:1}.
\end{remark}
\subsection{Finite Horizon Linear Quadratic Regulator (LQR)}
Note that the resulting linearized dynamics \eqref{eq:linearEq} is essentially a time-varying linear system, since it is derived through variation based linearization about a desired trajectory which can be time-varying. ($A(t),B(t),C(t)$ from \eqref{eq:linearEq}, \eqref{eq:linearEqConst} are expressed in terms of the desired states $x_d, R_d, q_{id}$). Any standard control technique used for a linear system are applicable. Since the system is time-varying, we implement a finite-horizon LQR controller. 

The state $s(t)$ gives the linear error in the system, which can be calculated using \cite[Eq.~(2),(4)]{goodarzi2014geometric}

A finite-horizon $T$ is chosen along with the positive semi-definite matrices $Q_1 = Q_1^T \geq 0 \in \mathbb{R}^{12+6n\times 12+6n}$ and $Q_2 = Q_2^T\geq 0\in \mathbb{R}^{{m}\times{m}}$, where $Q_1$ and $Q_2$ are weight matrices corresponding to the states $s(t)$ and control inputs $\delta u$. We also choose the final weight matrix at $t=T$, $P(T) = P_T = P^{\it T}_T\geq 0 \in \mathbb{R}^{12+6n\times 12+6n}$ as the weight matrix for the terminal state $s(T)$. Where $n$ is the number of links in the cable. 

To solve for the optimal solution of the finite-horizon LQR, we need to first solve the continous-time Riccati equation given below,
\begin{align}
-\dot{P}(t) =&\ Q_1 - P(t)B(t)Q_2^{-1}B(t)^TP(t) 
\nonumber
\\
+&\ A(t)^TP(t) + P(t)A(t) 
\label{eq:lqr}.
\end{align}
\indent
For real-time implementation, we need to intergrate ~\eqref{eq:lqr} backwards in time from $t=T$ to $t=0$, with the terminal condition $P(T) = P_T$.
The precomputed values of $P(t)$ are stored in a table for calculating the feedback gain online. 
Then the value $P(t)$ is used to calculate feedback control input for the linear system \eqref{eq:linearEq} as,
\begin{equation}
\delta u(t) = -K(t)s(t) = -Q_2^{-1}B(t)^TP(t)s(t),
\end{equation}
\indent 
Finally, the trajectory tracking controller	can be calculated as,
\begin{align}
u(t) = u_d(t) + \delta u(t),
\end{align}
where $u(t) = [f(t), M(t)]^T$, and the gain matrix $K(t)$ can be computed online based on the stored values of $P(t), B(t)$ and $Q_2$. 

\begin{remark}
Finite-Horizon LQR is implemented on the variation-based linearized dynamics of the errors on the manifold. The controller is globally stable for the linearized dynamics \eqref{eq:linearEq} and is locally stable for the complete non-linear dynamics \eqref{eq:tlee1}-\eqref{eq:tlee2}.
\end{remark}

\section{SIMULATION RESULTS}
\label{sec:simulation}
\begin{figure*}[t!]
\centering
\begin{subfigure}{.65\columnwidth}
\includegraphics[width=0.75\columnwidth]{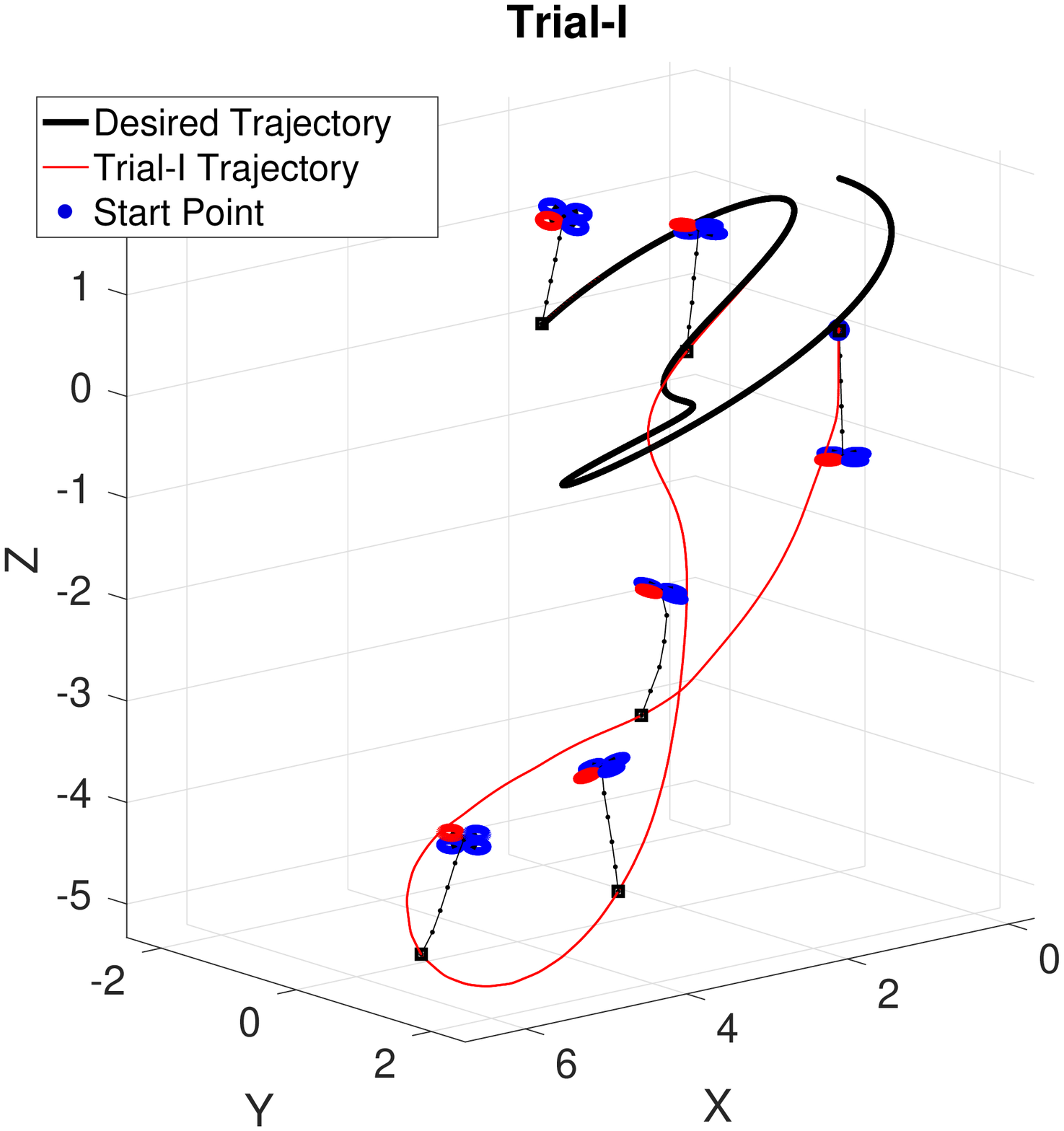}%
\caption{Trial-I}%
\label{fig:2a}%
\end{subfigure}\hfill%
\begin{subfigure}{.65\columnwidth}
\includegraphics[width=0.75\columnwidth]{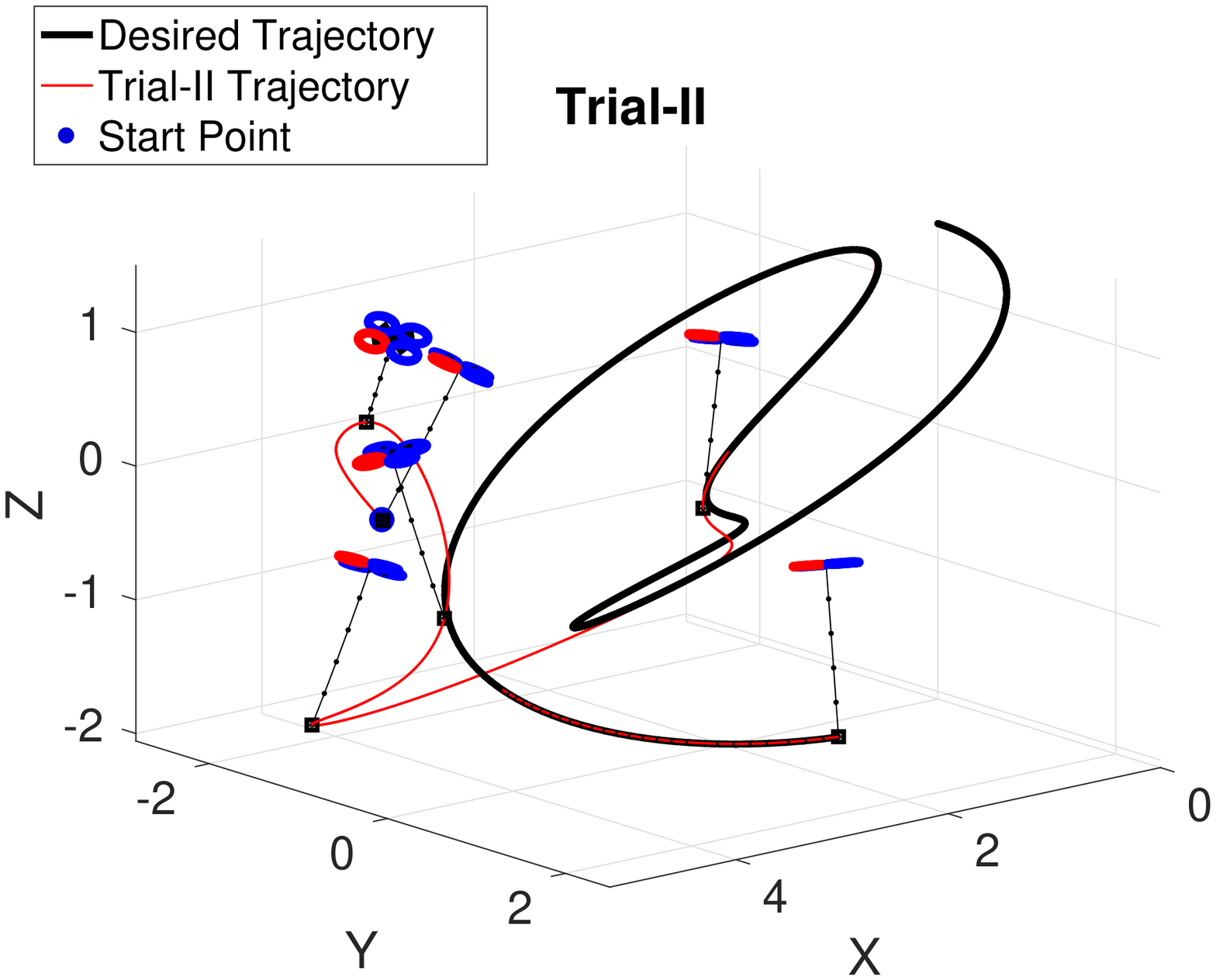}%
\caption{Trial-II}%
\label{fig:2b}%
\end{subfigure}\hfill%
\begin{subfigure}{.65\columnwidth}
\includegraphics[width=0.75\columnwidth]{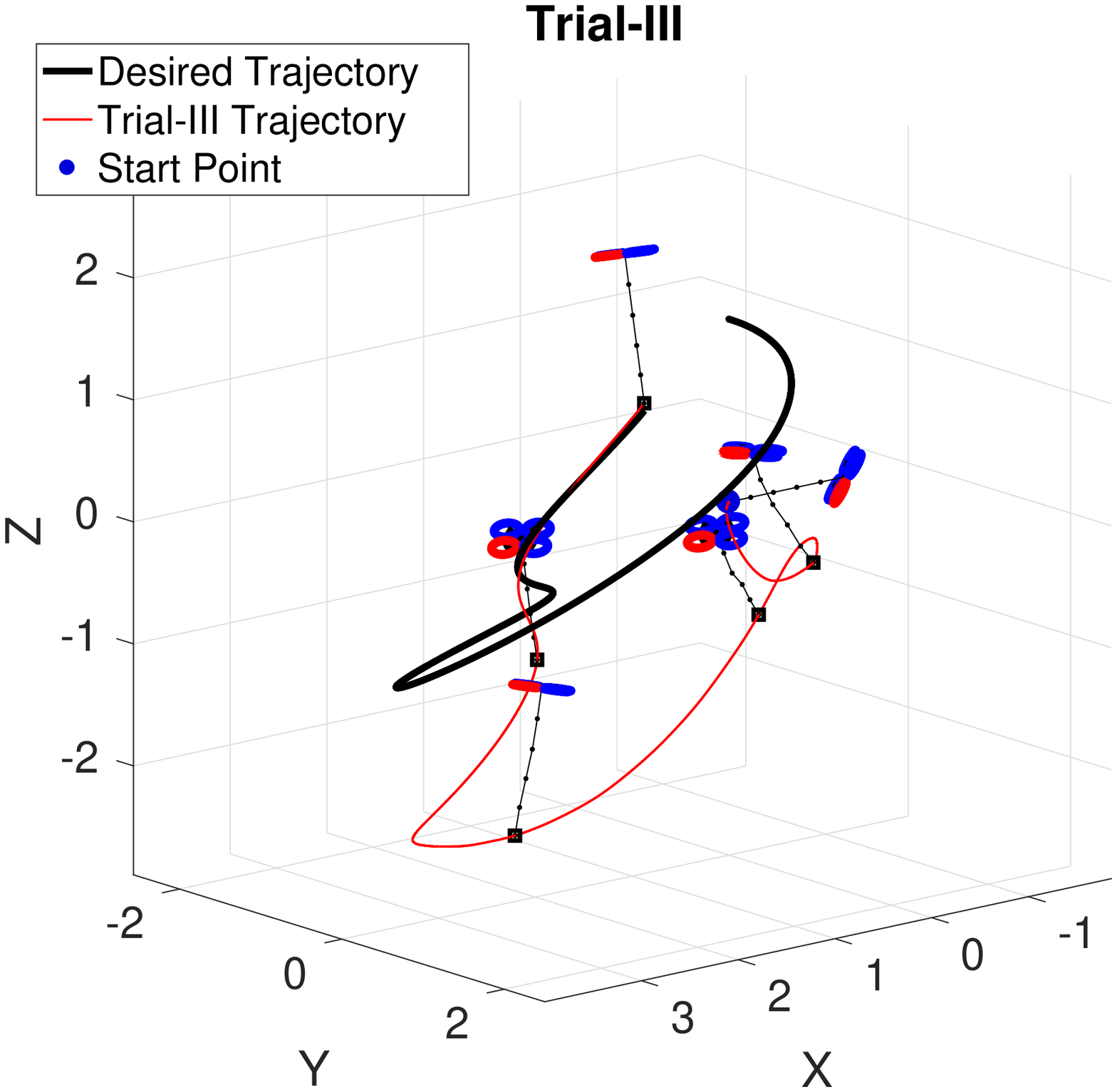}%
\caption{Trial-III}%
\label{fig:2c}%
\end{subfigure}%
\caption{Snapshots of quadrotor with load suspended by flexible cable at various instances of time along the trajectory (red) obtained through variation based linearization controller to track the reference trajectory (black). }
\label{fig:2}
\end{figure*}


\begin{figure}[t!]
\centering
\includegraphics[width=\columnwidth]{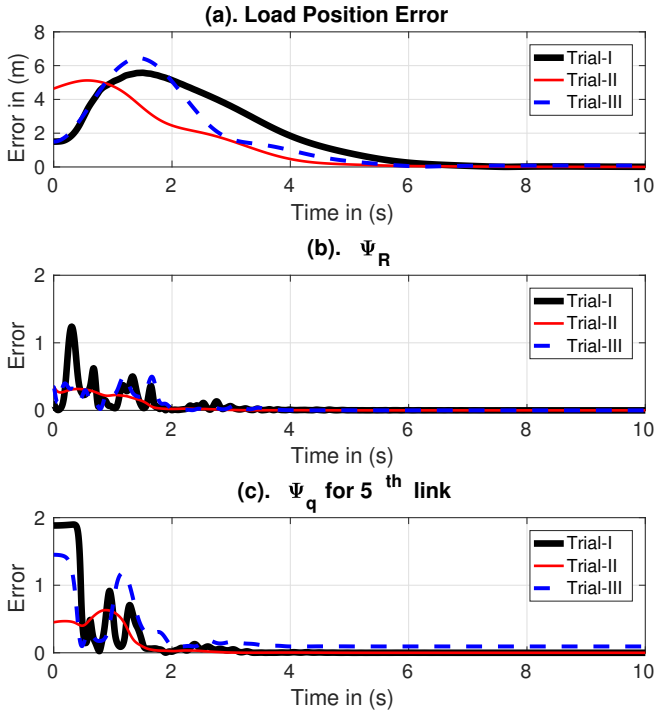}
\caption{Tracking errors obtained through simulation while tracking the reference trajectory using a LQR control designed based on the geometric variation-based linearized dynamics.}
\label{fig:3}
\end{figure}

Having presented the linearized dynamics and controllers, we now proceed to numerically validate it. 
In particular, we use the Matlab ode solver with 4th-order Runge-Kutta method and implement our controller. 
To study the performance of the controller developed in section \ref{sec:control}, we choose a moderately aggressive trajectory defined by flat outputs as,
\begin{align*}
x_n(t) = \begin{bmatrix}
a_x(1-\cos(2f_1\pi t)\\a_y\sin(2f_2\pi t) \\ a_z\cos(2f_3\pi t)
\end{bmatrix}, \quad 
\psi(t) \equiv 0,
\end{align*}
where {\small $a_x = 2,\,a_y = 2.5,\,a_z = 1.5,\,f_1 = \frac{1}{4},\,f_2 = \frac{1}{5},\,f_3=\frac{1}{7}$.}

The rest of the states and the nominal feedforward inputs required to track the trajectory are calculated through differential flatness. These states refer to the desired states used in calculating the errors and the values of $A,\ B$ in \eqref{eq:linearEq}. 
The controller performance is tested through several simulation tests.
In the simulation environment, the parameters of the quadrotor are given by
$
m_Q = 0.85kg,\quad J = diag([0.557, 0.557, 1.05])\times 10^{-2}kg.m^2
$
and the parameters of the cable are given as
$
n = 5,\quad m_i = 0.1\,kg,\quad l_i = 0.25m
$
for $i = 1,2,...,n$.
For the LQR controller, the weight matrices are chosen as,
\begin{equation*}
Q_1 = diag([Q_{11}, Q_{12}, Q_{13}, Q_{14}]),
\end{equation*}
where 
$
Q_{11} = 0.5I_6,\, Q_{12} = 0.75I_6,\, Q_{13} = I_{3n},\, Q_{14} = 0.75I_{3n}
$ and 
$
Q_2 = 0.2I_4,P_T = 0.01 \cdot I_{12+6n}
$.


Fig.~\ref{fig:2} illustrates the trajectories of the control system for three different initial conditions.
From Fig.~\ref{fig:2}, it can be seen that the trajectories for all three different initial conditions converge to the reference trajectory, even for the initial condition with large initial deviation.
This implies that the controller is still able to stabilize the trajectory to the reference, emphasizing that the linear controller developed through variation on manifolds has a large domain of attraction.  

Next, we loot at various position and attitude errors in Fig~\ref{fig:3}. In particular, the load position tracking error $\|(\delta x_n)\|_2$, Fig.~\ref{fig:3}a, rotation error $\Psi_R$ for quadrotor orientation, Fig.~\ref{fig:3}b and the orientation error $\Psi_q$ for $5^{th}$ link, Fig.~\ref{fig:3}c, converge to zero. (Definitions for $\Psi_R$ and $\Psi_{q_i}$ are given in \eqref{eq:so35}, \eqref{eq:s23} respectively.)
This validates that the controller developed for trajectory tracking of load suspended from a quadrotor by a flexible cable, through variation-based linearization. 

\begin{remark}
Discrete representation of the cable as discussed in the earlier sections captures the dynamics of the cable, however this increases the number of states in the system. These increased states makes the experimental implementation of the control harder to achieve, since the experimental implementation requires the measurement of the orientation of each link. We are working towards experimental validation as part of future work.
\end{remark}

\begin{figure*}[t]
\normalsize
\begin{equation}
\small
\label{eq:linearEqforA}
A = \begin{bmatrix}
\quad\quad\begin{bmatrix} 
-\widehat{\Omega}_d & I & O & O & \hdots & O  & O & O & \hdots & O \\
O & \Delta_1 & O & O & \hdots & O  & O & O & \hdots & O \\
O & O & O & O & \hdots & O & I & O & \hdots & O \\ 
O & O & O & \alpha_1 & \hdots & O & O & \beta_1 & \hdots & O \\
O & O & O & \vdots & \ddots & \vdots & \vdots & \vdots & \ddots & \vdots \\
O & O & O & O & \hdots & \alpha_n & O& O & \hdots & \beta_n \\
\end{bmatrix}\\
N^{-1}\begin{bmatrix}
\Delta_2 & O&O &a_{1} & \hdots & a_{n} &O& b_{1}&\hdots & b_{n} \\
O&O&O&c_{11} & \hdots & c_{1n} &O& d_{11}&\hdots & d_{1n}\\
O&O&O&c_{21} & \hdots & c_{2n} &O&  d_{21} & \hdots & d_{2n}\\
\vdots & \vdots &\vdots &\vdots  && \vdots &\vdots &\vdots & & \vdots\\
O& O&O &c_{n1}& \hdots & c_{nn} &O&  d_{n1} & \hdots & d_{nn}
\end{bmatrix}
\end{bmatrix},\quad
B = \begin{bmatrix}
\begin{bmatrix}
O_{3\times 1} & O\\
O_{3\times 1} & J^{-1}\\
O_{3\times 1} & O \\ 
O_{3n\times 1} & O_{3n\times 3} 
\end{bmatrix}\\ 
N^{-1}\begin{bmatrix}
R_de_3& O_{3\times 3} \\ O_{3n\times 1}& O_{3n\times 3 }
\end{bmatrix}
\end{bmatrix},
\end{equation}
\vspace*{4pt}
\end{figure*}

\section{CONCLUSIONS}
\label{sec:conclusion}
We have studied the payload transportation problem of multiple quadrotors with the payload suspended through flexible cable(s).  In particular, we have considered the following systems:
(a) single quadrotor with a point-mass payload suspended through a flexible cable; 
(b) multiple quadrotors with a shared point-mass payload suspended through flexible cables; and (c) multiple quadrotors with a shared rigid-body payload suspended through flexible cables.
For each of these systems, we have developed the Newton-Euler coordinate-free dynamic models and proven that the resulting dynamics are differentially-flat.
For the single quadrotor with a point-mass payload suspended through a flexible cable with five links (16 degrees-of-freedom and 12 degrees-of-underactuation), we have used the coordinate-free dynamics to develop a geometric variation-based linearized equations of motion about a desired trajectory. We show that a finite-horizon linear quadratic regulator, designed based on the linearized dynamics, can be used to track a desired trajectory with a relatively large region of attraction.  We demonstrate this through several numerical simulations. Control design for the rest of the systems will be presented in future work.

%

\section*{APPENDIX}
\subsection{Variation Expressions}
\label{sec:math}
The distance between points on a manifold can be measured through the concept of configuration error.
The infinitesimal variations can be considered as a linear approximation of this configuration error on the manifold. 
Geometric linearization is to get the dynamics of the infinitesimal variations in the form of a linear system.
For the purpose of control, we could roughly treat the variation as the error between the planned trajectory and the actual state. 
The corresponding expressions on $\mathbb R^3$, $S^2$ and $SO(3)$ are given as follows. 

\begin{remark}
The subscript $\bm{d}$ in the rest of the section refers to the time-varying desired reference trajectory. For a given sufficiently smooth load trajectory profile, desired states and feed-forward inputs can be easily calculated for differentially flat systems.
\end{remark}

\subsubsection{Variation in $\mathbb{R}^3$}
Infinitesimal variation in Cartesian space $\mathbb{R}^3$ with respect to a reference position vector $x_d(t)\in \mathbb{R}^3$ and velocity $v_d(t)\in \mathbb{R}^3$ are,
\begin{equation*}
\delta x(t) = x(t)-x_d(t),\quad \delta v(t) = v(t) - v_d(t).
\end{equation*}
\indent
For such flat space, the linear error state in $\mathbb{R}^3$ is the exact distance as, 
\begin{equation}
\label{eq:var3}
s = \begin{bmatrix}
\delta x \\ \delta v
\end{bmatrix} = \begin{bmatrix}
x(t)-x_d(t) \\ v(t) - v_d(t).
\end{bmatrix}
\end{equation}

\subsubsection{Variation in $S^2$}
Infinitesimal variation in $S^2$ with respect to a desired unit direction $q_{id}(t)\in S^2$ can be calculated as, 
\begin{equation}
\label{eq:s21}
\delta q_i(t) = \frac{d}{d\epsilon}\Big| _{\epsilon=0}e^{(\epsilon\widehat{\xi}_i)}q_{id}(t) = {\xi}_i\times q_{id}(t),
\end{equation}
where $\xi_i\in \mathbb{R}^3$, subject to $\xi_i.q_{id} = 0$ and
\begin{equation}
\delta\omega_i\cdot q_{id} + \omega_{id}\cdot (\xi_i\times q_{id}) = 0.
\end{equation}
%
\indent 
If the actual direction $q_i(t)$ is close to the desired direction vector $q_{id}(t)$, we can approximate the linear error states $[\xi_i, \delta \omega_i]$ to the errors, $e_{q_i}$, $e_{\omega_i}$ on $S^2$ (see \cite{Access2015_VariationLinearization}).
\begin{equation}
\label{eq:var2}
s = \begin{bmatrix}
\xi_i \\ \delta \omega_i
\end{bmatrix} \approx \begin{bmatrix}
e_{q_i} \\ e_{\omega_i}
\end{bmatrix} = \begin{bmatrix}
\widehat{q}_{id}(t)q_i(t) \\ \omega_i(t)-(-\hat{q}_i^2(t)\omega_{id}(t))
\end{bmatrix}.
\end{equation}
The configuration error for the cable link's direction on $S^2$ is given as,
\begin{align}
\label{eq:s23}
\Psi_{q_i} = (1-q_i.q_{id}).
\end{align}

\subsubsection{Variation in $SO(3)$}
Infinitesimal variation in $SO(3)$ with respect to a desired rotation matrix $R_d(t)\in SO(3)$ can be calculated as,
\begin{equation}
\label{eq:so31}
\delta R(t) = \frac{d}{d\epsilon}\Big| _{\epsilon=0}R_de^{(\epsilon\widehat{\eta})} = R_d(t)\widehat{\eta},
\end{equation}
where $\eta \in \mathbb{R}^3$. In a similar manner, the infinitesimal variation of body-angular velocities is given as,
\begin{equation}
\label{eq:so32}
\delta \Omega(t) = \widehat{\Omega}_d(t)\eta(t) + \dot{\eta}(t).
\end{equation}
%
%
%

If the actual rotation matrix $R(t)$ is close to the desired rotation matrix $R_d(t)$, it can be assumed that $[\eta,\,\delta\Omega]$ are linear approximation of the error $\begin{bmatrix}e_R,e_{\Omega} \end{bmatrix}^T$ between the actual and desired rotation matrices and angular velocities  (see \cite{Access2015_VariationLinearization}). 
Then we denote the error state as, 
\begin{equation}
\label{eq:var1}
s = \begin{bmatrix}
\eta \\ \delta\Omega
\end{bmatrix} \approx
\begin{bmatrix}
e_R \\ e_{\Omega}
\end{bmatrix} = \begin{bmatrix}
\frac{1}{2}\big(R_d^T(t)R(t) - R^T(t)R_d(t)\big)^\vee \\ \Omega(t) - \big(R^T(t)R_d(t)\big)\Omega_d(t)
\end{bmatrix}.
\end{equation}

The configuration error for the quadrotor rotation matrix on $SO(3)$ is given below,
\begin{equation}
\label{eq:so35}
\Psi_R = \frac{1}{2}(trace(I - R_d^TR)).
\end{equation}

\subsection{Linearized Dynamics}
In this subsection, we present the expressions for A, B  and C of the linear system in \eqref{eq:linearEq} with the derivation given in Appendix C. Expression for A and B are given in \eqref{eq:linearEqforA}. Here, 

$\Delta_1 =  J^{-1}(\widehat{J\Omega_d}-\widehat{\Omega}_dJ)$, $\Delta_2 = -f_dR_d\hat{e}_3$\\
and $\alpha_i = q_{id}q_{id}^T\widehat{\omega}_{id}$, $\beta_i = (I - q_{id}q_{id}^T),$
with,
{\small
\begin{equation*}
a_i = M_{0i}(\widehat{\dot{\omega}}_{id}- \lVert\omega_{id}\rVert^2I)\widehat{q}_{id},
\end{equation*}
\begin{equation*}
b_i = M_{0i}(2q_{id}\omega^T_{id}),
\end{equation*}
\[ c_{ij} =
  \begin{cases}
    \big[ M_{i0}\widehat{\ddot{x}}_{0d}-\sum^n_{k=1,k\neq i}M_{ik}(\widehat{\widehat{q}_{kd}\dot{\omega}}_{kd} +\\ \quad\quad \lVert\omega_{kd}\rVert^2\widehat{q}_{kd}) +\sum^n_{a=i}m_agl_i\hat{e}_3 \big](-\widehat{q}_{id}),       & \quad i = j\\
     M_{ij}\widehat{q}_{id}(\widehat{\dot{\omega}}_{jd}-\lVert\omega_{jd} \rVert^2I)\widehat{q}_{jd}, \quad & \quad i \neq j\\
  \end{cases}
\]
\[ d_{ij} =
  \begin{cases}
    O_{3 \times 3},\quad & \quad  i = j\\
    2M_{ij}\hat{q}_{id}q_{jd}\omega^T_{jd},\quad   & \quad   i \neq j\\
  \end{cases}
\]}
and 
{\footnotesize
\begin{multline}
\label{eq:eomD}
N = \\
\begin{bmatrix}
M_{00}I & -M_{01}\hat{q}_{1d} & -M_{02}\hat{q}_{2d} & \hdots & -M_{0n}\hat{q}_{nd}\\
\hat{q}_{1d}M_{10} & M_{11}I & -M_{12}\hat{q}_{1d}\hat{q}_{2d}  & \hdots & -M_{1n}\hat{q}_{1d}\hat{q}_{nd}\\
\hat{q}_{2d}M_{20} & -M_{21}\hat{q}_{2d}\hat{q}_{1d}  & M_{22}I & \hdots & -M_{2n}\hat{q}_{2d}\hat{q}_{nd}\\
\vdots & \vdots & \vdots & & \vdots\\
\hat{q}_{nd}M_{n0} & -M_{n1}\hat{q}_{nd}\hat{q}_{1d}  & -M_{n2}\hat{q}_{nd}\hat{q}_{2d}  & \hdots & M_{nn}I
\end{bmatrix}.
\end{multline}
}
The constraint matrix, {\footnotesize
\begin{equation}
C = \begin{bmatrix}
O_{n\times 9 } & diag([C1_1,\hdots,C1_n]) & O_{n\times 3n}\\
O_{n\times 9 } & diag([C2_1,\hdots,C2_n]) &diag([C3_1,\hdots,C3_n])\\
\end{bmatrix}
\end{equation}}

with,
\begin{equation*}
C1_i = q^T_{id},\quad C2_i = -\omega^T_{id}\widehat{q}_{id},\quad C3_i = q^T_{id}.
\end{equation*}
Here, $diag([\,])$ represents a block diagonal matrix. $O_{i\times j}$ refers to the zero matrix with size $i\times j$, where as $O$ is a zero matrix of size $3\times 3 $. Finally $I$ refers to the Identity matrix of size $3\times 3$.

\subsection{Derivation for linearized system dynamics}
\label{appendix:B}
Linearized equations of motion is provided in \eqref{eq:linearEq} and in Appendix B. Here we present the detailed derivation for the variation based linearization for quadrotor with load suspended by flexible cable. Equation \eqref{eq:tlee1} can be separated into \eqref{eq:b1} \& \eqref{eq:b2},{\small
\begin{gather}
M_{00}I\dot{v}_0 -\sum^n_{j=1}M_{0j}\widehat{q}_j\dot{\omega}_j
= \sum^n_{j=1}M_{0j}\lVert \omega_j \rVert^2q_j+fRe_3 - M_{00}ge_3, \label{eq:b1}
\end{gather}}
and
{\small
\begin{gather}
\widehat{q}_iM_{i0}\dot{v}_0 - \sum^n_{j=1,j\neq i}M_{ij} \widehat{q}_i\widehat{q}_j\dot{\omega}_j + M_{ii}I\dot{\omega}_i \nonumber \\
= \sum^n_{j=1,j\neq i}M_{ij}\lVert\omega_j\rVert^2\widehat{q}_iq_j - \sum^n_{a=i}m_agl_i\widehat{q}_ie_3, \label{eq:b2}
\end{gather}}
$\forall\, i=\{1,2,..,n \}$. Taking variation on \eqref{eq:b1} about a desired trajectory results in, {\small
\begin{gather}
M_{00}I(\delta\dot{v}_0) -\sum^n_{j=1}M_{0j}\big[(\delta\widehat{q}_j)\dot{\omega}_{jd} + \widehat{q}_{jd}(\delta\dot{\omega}_{j})\big] \nonumber \\
= \sum^n_{j=1}M_{0j}\big[\delta(\lVert \omega_j\rVert^2)q_{jd} + \lVert \omega_{jd}\rVert^2\delta(q_{jd})\big]\nonumber \\
+f_d(\delta R)e_3 + (\delta f)R_de_3 -\delta(M_{00}ge_3).
\label{eq:b1a}
\end{gather}
From \eqref{eq:so31} and \eqref{eq:s21}, we have $\delta R = R_d\widehat{\eta}$ and $\delta q_j = \widehat{\xi}_jq_{jd} \implies \delta q_j = - \widehat{q}_{jd}\xi_j$ and also $\lVert \omega_j \rVert^2 = \omega_j^T\omega_j \implies \delta(\lVert \omega_j \rVert^2) = 2{\omega}_j^T(\delta \omega_j)$. Substituting these in \eqref{eq:b1a} and simplifying we get,\\
{ \small
\begin{gather}
M_{00}I(\delta\dot{v}_0) -\sum^n_{j=1}M_{0j} \widehat{q}_{jd}(\delta\dot{\omega}_{j}) =  \sum^n_{j=1}M_{0j}\big[(\widehat{\dot{\omega}}_{jd}- \lVert\omega_{jd}\rVert^2)\widehat{q}_{jd}\big]\xi_j \nonumber \\
+\sum^n_{j=1}M_{0j}(2q_{jd}\omega^T_{jd})(\delta \omega_j) -f_dR_d\widehat{e}_3\eta+ (\delta f)R_de_3. \label{eq:b1b}
\end{gather}}
Similarly applying variation to \eqref{eq:b2}, we have,{\small
\begin{gather}
M_{i0}\big[(\delta\widehat{q}_i)\dot{v}_0+\widehat{q}_i(\delta\ddot{x}_0)\big] + M_{ii}I(\delta\dot{\omega}_i) \nonumber \\
-\sum^n_{j=1,j\neq i}M_{ij}\big[ (
\delta\widehat{q}_i)\widehat{q}_{jd}\dot{\omega}_{jd} + \widehat{q}_{id}(\delta\widehat{q}_j)\dot{\omega}_{jd}+ \widehat{q}_{id}\widehat{q}_{jd}(\delta \dot{\omega}_j)\big] \nonumber \\
= \sum^n_{j=1,j\neq i}M_{ij}\big[\lVert\omega_{jd}\rVert^2\widehat{q}_{id}(\delta q_j)+ \lVert\omega_{jd}\rVert^2(\delta\widehat{q}_i)q_{jd} \nonumber \\
+ 2\widehat{q}_{id}q_{jd}\omega^T_{jd}(\delta \omega_j)\big]+\sum^n_{a=i}m_agl_i\hat{e}_3(\delta q_i), \label{eq:b2a}
\end{gather}}
and simplifying it results,{\small
\begin{gather}
M_{i0}\widehat{q}_{id}(\delta\dot{v}_0) - \sum^n_{j=1,j\neq i}M_{ij} \widehat{q}_{id}\widehat{q}_{jd}(\delta\dot{\omega}_j) + M_{ii}I(\delta\dot{\omega}_i) \nonumber \\
= \bigg[ M_{i0}\widehat{\dot{v}}_{0d}-\sum^n_{k=1,k\neq i}M_{ik}(\widehat{\widehat{q}_{kd}\dot{\omega}}_{kd} + \lVert\omega_{kd}\rVert^2\widehat{q}_{kd})\nonumber \\
 +\sum^n_{a=i}m_agl_i\hat{e}_3 \bigg](-\widehat{q}_{id})\xi_i + \sum^n_{j=1,j\neq i}\bigg[M_{ij}\widehat{q}_{id}(\widehat{\dot{\omega}}_{jd}-\lVert\omega_{jd} \rVert^2)\widehat{q}_{jd} \bigg]\xi_j\nonumber \\ 
 + \sum^n_{j=1,i\neq j}\bigg[2M_{ij}\hat{q}_{id}q_{jd}\omega^T_{jd} \bigg](\delta \omega_j).
\label{eq:b2b}
\end{gather}}

Derivatives of the variations $\delta x_0$, $\delta q_i$, and $\delta R$ are as given below,
\begin{equation}
\label{eq:b3}
\delta v_0 = \delta \dot{x}_0,\quad \delta \dot{v}_0 = \delta \ddot{x}_0
\end{equation}
\begin{equation*}
\begin{split}
(\delta\dot{q}_i) = (\delta\omega_i) \times q_{id} + \omega_{id}\times(\delta q_i)\\
-\hat{q}_{id}\dot{\xi}_i - \hat{\dot{q}}_{id}\xi_{i} = -\hat{q}_{id}(\delta\omega_i) - \hat{\omega}_{id}\hat{q}_{id}\xi_i
\end{split}
\end{equation*}
\begin{equation}
\label{eq:b4}
\dot{\xi}_i = (q_{id}q^T_{id}\hat{\omega}_{id})\xi_i + (I - q_{id}q^T_{id})(\delta \omega_i)
\end{equation}
\begin{equation*}
(\delta\dot{R}) = (\delta R)\hat{\Omega}_d + R_d(\delta\Omega),
\end{equation*}
substituting values for $\delta R$ \& $\delta \dot{R}$ gives,
\begin{equation}
\label{eq:b5}
\dot{\eta} = -\hat{\Omega}_d\eta + I(\delta \Omega),
\end{equation}
\begin{equation}
\label{eq:b6}
\delta \dot{\Omega} = J^{-1}(\delta M + (\widehat{J\Omega_d} - \widehat{\Omega}_dJ)\delta\Omega),
\end{equation}
Finally \eqref{eq:b2a}, \eqref{eq:b2b}, \eqref{eq:b3}, \eqref{eq:b4}, \eqref{eq:b5} \& \eqref{eq:b6}, combined together results in the linearized system dynamics given in \eqref{eq:linearEq} and \eqref{eq:linearEqforA}. 

The constraint in the variation on $S^2$  given by, $\xi_i.q_{id} = 0$ results in a variation based constraint for the linearized dynamics. Constraint is valid for all time thus, $ \frac{d}{dt}(\xi_i.q_{id}) = 0$ and this gives, $-\omega^T_{id}\hat{q}_{id}\xi + q^T_{id}(\delta \omega) = 0$. These two constraints  applied for all links results in the constraint martrix given in \eqref{eq:linearEqConst}.



\balance	
\bibliographystyle{IEEEtranS}
\bibliography{references,intro_refs}

\end{document}